\numberwithin{equation}{section}
\newtheorem{theorem}{Theorem}[section]
\newtheorem{definition}{Definition}[section]
\newtheorem{example}{Example}[section]
\newcommand{\map}[3]{#1:#2 \rightarrow #3}
\newcommand{\R}{\mathbb{R}}
\definecolor{CBpurp}{RGB}{230,97,10}
\definecolor{CBorange}{RGB}{93,58,155}
\newcommand{\Sq}[2]{
    \draw[black, fill=gray!6] (#1,#2) rectangle (#1 +1, #2 +1);
}
\newcommand{\ACube}[3]{
	\draw[cube] (#1,#2,#3) -- (#1,#2 + 1,#3) -- (#1+1,#2 + 1,#3) -- (#1 + 1,#2,#3) -- cycle;
	\draw[cube] (#1,#2,#3+1) -- (#1,#2+1,#3+1) -- (#1+1,#2+1,#3+1) -- (#1+1,#2,#3+1) -- cycle;
	
	\draw[cube] (#1,#2,#3) -- (#1,#2,#3+1);
	\draw[cube] (#1,#2+1,#3) -- (#1,#2+1,#3+1);
	\draw[cube] (#1+1,#2,#3) -- (#1 + 1,#2,#3+1);
	\draw[cube] (#1+1,#2+1,#3) -- (#1 + 1,#2+1,#3+1);

}
\newcommand{\PurpCube}[3]{
	\draw[cube, draw = CBorange] (#1,#2,#3) -- (#1,#2 + 1,#3) -- (#1+1,#2 + 1,#3) -- (#1 + 1,#2,#3) -- cycle;
	\draw[cube, draw = CBorange] (#1,#2,#3+1) -- (#1,#2+1,#3+1) -- (#1+1,#2+1,#3+1) -- (#1+1,#2,#3+1) -- cycle;
	
	\draw[cube, draw = CBorange] (#1,#2,#3) -- (#1,#2,#3+1);
	\draw[cube, draw = CBorange] (#1,#2+1,#3) -- (#1,#2+1,#3+1);
	\draw[cube, draw = CBorange] (#1+1,#2,#3) -- (#1 + 1,#2,#3+1);
	\draw[cube, draw = CBorange] (#1+1,#2+1,#3) -- (#1 + 1,#2+1,#3+1);

}
\newcommand{\OrCube}[3]{
	\draw[cube, draw = CBpurp] (#1,#2,#3) -- (#1,#2 + 1,#3) -- (#1+1,#2 + 1,#3) -- (#1 + 1,#2,#3) -- cycle;
	\draw[cube, draw = CBpurp] (#1,#2,#3+1) -- (#1,#2+1,#3+1) -- (#1+1,#2+1,#3+1) -- (#1+1,#2,#3+1) -- cycle;
	
	\draw[cube, draw = CBpurp] (#1,#2,#3) -- (#1,#2,#3+1);
	\draw[cube, draw = CBpurp] (#1,#2+1,#3) -- (#1,#2+1,#3+1);
	\draw[cube, draw = CBpurp] (#1+1,#2,#3) -- (#1 + 1,#2,#3+1);
	\draw[cube, draw = CBpurp] (#1+1,#2+1,#3) -- (#1 + 1,#2+1,#3+1);

}
\title{Optimal transport for some symmetric, multidimensional integer partitions}
\author{Daniel Adu, Daniel Keliher}
\address{University of Georgia, Athens GA, 30602, USA}
\email{daniel.adu@uga.edu}
\email{keliher@uga.edu}
\begin{document}

\keywords{Optimal transport, integer partitions}

\maketitle

\begin{abstract}
    A result of Hohloch links the theory of integer partitions with the Monge formulation of the optimal transport problem, giving the optimal transport map between (Young diagrams of) integer partitions and their corresponding symmetric partitions. Our aim is to extend Hohloch's result to the higher dimensional case. In doing so, we show the Kantorovich formulation of the optimal transport problem provides the tool to study the matching of higher dimensional partitions with their corresponding symmetric partitions.
\end{abstract}

\section{Introduction}

This paper concerns the intersection of the theory of integer partitions and of optimal transport.  Hohloch has made this connection in \cite{Hohloch} for one-dimensional integer partitions, where the Monge formulation of optimal transport problem~\cite{Monge} was used as a tool to describe and relate some bijections coming from the theory of integer partitions (e.g. self-symmetric partitions and partitions associated via Euler's identity). While Hohloch, in~\cite{Hohloch} does not provide a specific practical scenario for exploring the connection between two seemingly unrelated fields, the theory of integer partitions and of optimal transport, one potential application of the link between optimal transport and integer partitions could be in data analysis. Optimal transport can be used to compare probability distributions, and integer partitions can be used to represent data in a structured way. By linking these two fields, it may be possible to develop new methods for analyzing and comparing data sets that are represented as integer partitions. 

To state the one-dimensional result in~\cite{Hohloch} more precisely, we begin with the following notations and definitions; given an integer $n\in\mathbb{N}$, let $\mathcal{P}(n)$ be the set of partitions of $n$ and $\pi \in \mathcal{P}(n)$ represent a partition of $n$. For any $\pi\in \mathcal{P}(n)$, one can associate a unique diagram called a Young diagram, $Y(\pi)$ (see Definition \ref{def:YD}). Given $\pi$ and the corresponding Young diagram $Y(\pi)$, by reflecting the Young diagram  $Y(\pi)$ across the line $y=x$  we obtain another Young diagram. We denote the reflected Young diagram by $Y(\mathrm{sym}(\pi))$, where $\mathrm{sym}(\pi)$ is called the symmetric partition of $\pi$ and is the corresponding partition for $Y(\mathrm{sym}(\pi))$ (see Figure~\ref{fig:young1D}).  Given $Y(\pi)$ and $Y(\mathrm{sym}(\pi))$, one can construct probability measures  $\delta_{\pi}$ and $\delta_{\mathrm{sym}(\pi)}$. Hohloch, in~\cite{Hohloch}, constructed such measures using Dirac measures concentrated on the corners of each square of a Young diagram closest to the origin. This raises two natural questions:  what is the optimal way to match $\pi$ to $\mathrm{sym}(\pi)$, and what properties of $\mathrm{sym}(\pi)$ can we infer from $\pi$? We summarize one result from~\cite{Hohloch} as follows.

    \begin{enumerate}
    
    \item If the cost function in Monge problem~\cite{Monge} is Euclidean distance, then the function which is the identity map on $\mathrm{spt}(\delta_\pi) \cap \mathrm{spt}(\delta_{\mathrm{sym}(\pi)})$ and is otherwise reflection across $y=x$, is optimal for $\delta_\pi$ and $\delta_{\mathrm{sym}(\pi)}$, where $\mathrm{spt}(-)$ denotes the support of the measure.
    
    \item We have $\pi = \mathrm{sym}(\pi)$ if an only if $\delta_\pi = \delta_{\mathrm{sym}(\pi)}$, i.e. the identity map is optimal.
    \end{enumerate}

\begin{figure}
    \centering
\begin{tikzpicture}
		[cube/.style={very thick,black},
			grid/.style={very thin,gray},
			axis/.style={->,blue,thick}]
	\draw[axis, draw = gray] (0,0) -- (5,0) node[anchor=west]{};
	\draw[axis, draw = gray] (0,0) -- (0,5) node[anchor=west]{};
    \Sq{0}{0}
    \Sq{1}{0}
    \Sq{2}{0}
    \Sq{0}{1}
    \Sq{1}{1}
    \Sq{3}{0}
    \draw[red,dashed] (0,0) -- (5,5);
\end{tikzpicture}
\begin{tikzpicture}
		[cube/.style={very thick,black},
			grid/.style={very thin,gray},
			axis/.style={->,blue,thick}]
	\draw[axis, draw = gray] (0,0) -- (5,0) node[anchor=west]{};
	\draw[axis, draw = gray] (0,0) -- (0,5) node[anchor=west]{};
    \Sq{0}{0}
    \Sq{1}{0}
    \Sq{0}{2}
    \Sq{0}{1}
    \Sq{1}{1}
    \Sq{0}{3}
    \draw[red,dashed] (0,0) -- (5,5);
\end{tikzpicture}
    \caption{Left: Young diagram $Y(\pi)$ of the partition  $\pi = (4,2)\in\mathcal{P}(6)$. Right: Young diagram $Y(\mathrm{sym}(\pi))$ of the partition $\mathrm{sym}(\pi)=(2,2,1,1)\in\mathcal{P}(6)$. The Young diagram $Y(\mathrm{sym}(\pi))$ is obtained by reflecting $Y(\pi)$ across $y=x$.}
    \label{fig:young1D}
\end{figure}
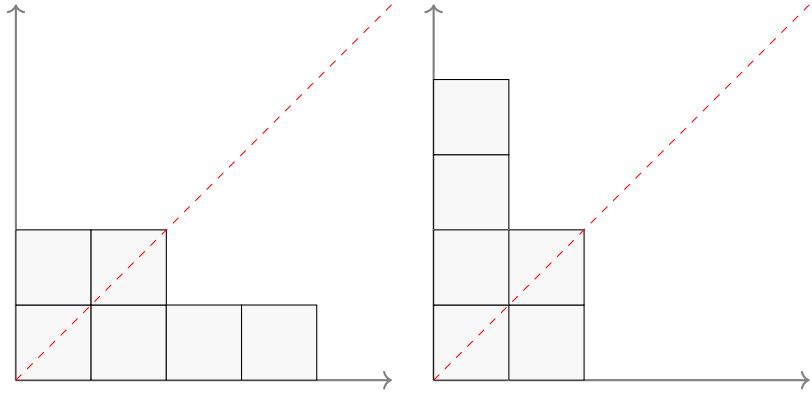

For instance, in Figure \ref{fig:young1D}, the map which is optimal between the left-hand and right-hand diagrams is the one which leaves the four common squares (i.e. the intersection of the supports of the two corresponding measures) fixed, and moves the squares with lower left corners $(2,0)$ and $(3,0)$ in the left-hand diagram to the ones with lower left corners $(0,2)$ and $(0,3)$, respectively, in the right-hand diagram. 

In \cite{Hohloch}*{Conjecture 4.2}, Hohloch conjectures that the results in (1) and (2) above can be extended to higher dimensional integer partitions. The main contribution of this note is to prove the conjecture: see Theorem \ref{thm:main} and Theorem \ref{cor:main}.

 \subsection{Outline} In Section \ref{sec:partitions}, we provide formal definitions related to integer partitions and their higher dimensional analogues, as well as describe how we interpret the $m$-dimensional partitions as the appropriate probability measures  which will allow us compare different partitions using optimal transport. For this reason, we review some results from optimal transport in Section~\ref{sec:OT}. We state and provide a proof of our main result in Section \ref{sec:proof}. Finally, Section~\ref{sec:conclusion and future work} includes concluding remarks and some possible directions of future investigation.

\section{Integer Partitions}\label{sec:partitions}
In this section we briefly recall some basic definitions related to integer partitions and their higher dimensional counterparts. The study of integer partitions has a rich history in number theory and combinatorics; see e.g. \cite{VanL}.

\begin{definition}
Let $n\in\mathbb{N}$. A partition of $n$ is an ordered tuple of integers $(n_1,\dots,n_k)$, where $n_1 \geq n_2 \geq \hdots \geq n_k \geq 1$, $n_i\in\mathbb{N}$ for all $i\in\{1,\dots,k\}$, such that  $\sum_{i=1}^k n_i=n$.
\end{definition}
Given $n\in\mathbb{N}$, we denote by $\mathcal{P}(n)$ the collection of all the possible partitions on $n$ and set $p(n)=\#\mathcal{P}(n)$. For example, 
$$\mathcal{P}(4) = \{(4), (3,1), (2,2), (2,1,1), (1,1,1,1)\}$$
and $p(4)=5$.

Integer partitions have a natural higher dimensional analogue, which we now define following \cite{Hohloch}*{Definition 3.4}. 

\begin{definition}\label{def:multipart}
 Let $n \in \mathbb{N}$. An $m$-dimensional partition of $n$ is an array of integers $n_{i_1,...,i_m} \in \mathbb{N}$ where  $1 \leq i_j \leq k_j$ for some integers $1 \leq k_j \leq n$, $j=1,...,m$, such that for each index $i_j=1,...,k_j$ the integers $n_{i_1,...,i_m}$ are monotone a decreasing sequence with $n \geq \max_{i_j \in \{1,...,k_j\}}n_{i_1,...,i_m}$ and $\min_{i_j \in \{1,...,k_j\}}n_{i_1,...,i_m} \geq 1$, and $\displaystyle \sum_{i_1=1}^{k_1} \hdots \sum_{i_m=1}^{k_m} n_{i_1,\cdots, i_m}=n$.
\end{definition}

We write $\mathcal{P}_m(n)$ for the set of all $m$-dimensional partitions of $n$, and set $p_m(n) = \# \mathcal{P}_m(n)$.

For example,
\begin{equation}\label{eq:2partex}
    \left[
    \begin{tabular}{cc}
        1 & ~ \\
        2 & 1
    \end{tabular}
    \right] \text{ and }
        \left[
    \begin{tabular}{ccc}
        1 &  & \\
        2 & 1& \\
        3 & 1& 1 
    \end{tabular}
    \right]
\end{equation}
are 2-dimensional partitions of $4$ and $9$, respectively. 

To represent a partition, we have the convenient notion of a Young diagram\footnote{NB multiple conventions for Young diagrams appear in the literature.}. In the one dimensional case, the Young diagram of a partition $\lambda = (\lambda_1,\lambda_2,...\lambda_k) \in \mathcal{P}(n)$ is $n$ squares arranged in left-justified rows where the bottom row has $\lambda_1$ squares, the second row has $\lambda_2$ squares, and so on. Figure \ref{fig:young} shows the Young diagram for two partitions of \eqref{eq:2partex} from above. We can think of a Young diagram of a partition $\pi \in \mathcal{P}_m(n)$ as a finite collection of $n$ unit cubes in $\mathbb{R}^{m+1}$ with positions regulated by the choice of partition, $\pi$. 

\begin{definition}\label{def:YD}
If $\pi = (n_{i_1,...,i_m})_{\substack{1 \leq n_j \leq k_j \\ j=1,...,m}} \in \mathcal{P}_m(n)$ as in Definition \ref{def:multipart}, the Young diagram of $\pi$, denoted $Y(\pi)$, is the following union of unit cubes in $\mathbb{R}^{m+1}$:
\begin{equation}\label{eq:YDdef}
    Y(\pi) := \bigcup_{1 \leq i_1,...,i_m\leq k_1,...,k_m} \bigcup_{\alpha=1}^{n_{i_1,...,i_m}}\left( [\alpha-1, \alpha] \times \prod_{j=1}^m [i_j-1, i_j] \right).
\end{equation}
\end{definition}
\begin{figure}
    \centering 
\begin{tikzpicture}
		[cube/.style={very thick,black},
			grid/.style={very thin,gray},
			axis/.style={->,blue,thick}]

	\draw[axis, draw = gray] (0,0,0) -- (4,0,0) node[anchor=west]{};
	\draw[axis, draw = gray] (0,0,0) -- (0,4,0) node[anchor=west]{};
	\draw[axis, draw = gray] (0,0,0) -- (0,0,4) node[anchor=west]{};

    \ACube{0}{0}{0}
    \ACube{0}{1}{0}
    \ACube{1}{0}{0}
    \ACube{0}{0}{1}
\end{tikzpicture}
\begin{tikzpicture}
		[cube/.style={very thick,black},
			grid/.style={very thin,gray},
			axis/.style={->,blue,thick}]

	\draw[axis, draw = gray] (0,0,0) -- (4,0,0) node[anchor=west]{};
	\draw[axis, draw = gray] (0,0,0) -- (0,4,0) node[anchor=west]{};
	\draw[axis, draw = gray] (0,0,0) -- (0,0,4) node[anchor=west]{};

    \ACube{0}{0}{0}
    \ACube{0}{1}{0}
    \ACube{0}{2}{0}

    \ACube{1}{0}{0}
    \ACube{1}{1}{0}
    
    \ACube{2}{0}{0}

    \ACube{1}{0}{1}

    \ACube{0}{0}{1}
    \ACube{0}{0}{2}
    
\end{tikzpicture}
    \caption{Young diagrams of a partition in $\mathcal{P}_2(4)$ (left) and a partition in $\mathcal{P}_2(9)$ (right) from \eqref{eq:2partex}}
    \label{fig:young}
\end{figure}
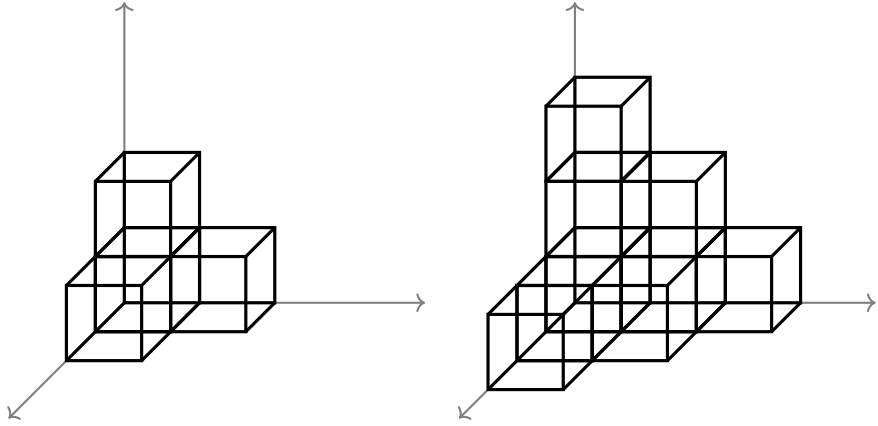

In a similar fashion, we can ascribe to each partition $\pi$, a probability measure, $\delta_\pi$, which is a sum of point masses as follows:

\begin{equation}\label{eq:YDmeaure}
    \delta_\pi:= \frac{1}{n} \sum_{1 \leq i_1,...,i_m\leq k_1,...,k_m} \sum_{\alpha=1}^{n_{i_1,...,i_m}} \delta(i_1,...,i_m,\alpha)
\end{equation}
where $\delta(x_1,...,x_{m+1})$ is a Dirac delta at the point $(x_1,...,x_{m+1})$. Observe that $\delta_\pi(\mathbb{R}^{m+1})=1$ for any partition $\pi \in \mathcal{P}_m(n)$.

The intuition for \eqref{eq:YDmeaure} case can be thought of roughly as follows: we can imagine $\delta_\pi$ as assigning a unit point mass to each unit cube in $Y(\pi)$ taking the value 1 on the corner of each such cube with minimal Euclidean distance to the origin, and 0 everywhere else.

Given a permutation $\sigma \in  S_{m+1}$ letters, one can associate to any $m$-dimensional partition a new partition as follows.

\begin{definition}[\cite{Hohloch}*{Definition 4.5}]\label{def:selfsym}
Given $\sigma \in S_{m+1}$, an element of the symmetric group on $m+1$ elements,  let $T_\sigma: \mathbb{R}^{m+1} \rightarrow \mathbb{R}^{m+1}$ be the linear map defined by $e_i \mapsto e_{\sigma(i)}$ where $e_i$, $i=1,...,m+1$, is the standard basis of $\mathbb{R}^{m+1}$. For any $\pi \in \mathcal{P}_m(n)$, 
\begin{itemize}
    \item the $\sigma$-symmetric partition of $\pi$, denoted by $\text{sym}_\sigma(\pi)$, is the partition whose Young diagram satisfies $Y(\text{sym}_\sigma(\pi)) = T_\sigma (Y(\pi))$;
    \item if $\pi = \text{sym}_\sigma(\pi)$, then we call $\pi$ $\sigma$-self-symmetric. 
\end{itemize}
\end{definition}

This definition generalizes the concept of self-symmetric partitions in one-dimension, which are partitions whose Young diagrams are invariant under reflection across the $y=x$ line. The $\sigma$-self-symmetric partitions are invariant under a more general type of reflection, determined by the permutation $\sigma$. Figure \ref{fig:sigmaselfsym} gives an example of a partition $\pi \in \mathcal{P}_2(6)$ alongside $\text{sym}_{(23)}(\pi)$, i.e. partitions which are $(2~3)$-symmetric. 

Notice that if $\tau \in S_2$ is  not the identity permutation, then any partition  $\pi \in \mathcal{P}_1(n)$ has a $\tau$-symmetric partition which is just the partition obtained by reflecting the Young diagram of $\pi$, now in $\mathbb{R}^2$, across the line $y=x$. In this restricted case, $\pi$ is called \emph{self-symmetric} if its Young diagram is invariant under reflection across $y=x$.

\begin{figure}
    \centering
\begin{tikzpicture}
		[cube/.style={very thick,black},
			grid/.style={very thin,gray},
			axis/.style={->,blue,thick}]

	\draw[axis, draw = gray] (0,0,0) -- (4,0,0) node[anchor=west]{};
	\draw[axis, draw = gray] (0,0,0) -- (0,4,0) node[anchor=west]{};
	\draw[axis, draw = gray] (0,0,0) -- (0,0,4) node[anchor=west]{};

    \ACube{0}{0}{0}
    \ACube{0}{1}{0}

    \ACube{1}{0}{0}

    \ACube{0}{0}{1}
    \ACube{0}{0}{2}

    \ACube{0}{1}{1}
    
\end{tikzpicture}
\begin{tikzpicture}
		[cube/.style={very thick,black},
			grid/.style={very thin,gray},
			axis/.style={->,blue,thick}]

	\draw[axis, draw = gray] (0,0,0) -- (4,0,0) node[anchor=west]{};
	\draw[axis, draw = gray] (0,0,0) -- (0,4,0) node[anchor=west]{};
	\draw[axis, draw = gray] (0,0,0) -- (0,0,4) node[anchor=west]{};

    \ACube{0}{0}{0}
    \ACube{0}{1}{0}

    \ACube{1}{0}{0}
   
   \ACube{0}{0}{1}
    \ACube{0}{0}{2}

    \ACube{1}{0}{1}
    
\end{tikzpicture}
    
    \caption{Left: $\pi \in \mathcal{P}_2(6)$. Right: The $\sigma$-symmetric partition of $\pi$ with 
    $\sigma = (2~3) \in S_3$.}
    \label{fig:sigmaselfsym}
\end{figure}
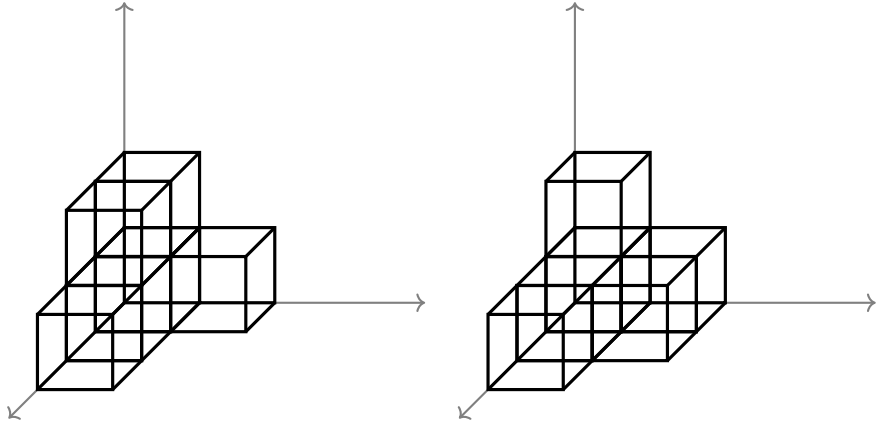

\section{Optimal Transport}\label{sec:OT}

Our goal is to investigate patterns between the $m$-dimensional partition to its corresponding symmetric partition. The framework that enables us to establish the pattern is the optimal transport framework. Therefore, we state the problem and an important preliminary result on the theory of optimal transport~\cites{Villani,Galichon}. Readers who are familiar can skip this section and refer to it when needed.  In order to state the problem more precisely, we introduce some mathematical notions. 
  Let $x_1,x_2\in \R_{+}^{m+1}$ be an $m+1$-tuples of positive real numbers such that $\sum_{j=1}^{m+1}x_{1,j}=\sum_{j=1}^{m+1}x_{2,j}=1$ where $x_{i,j}$,  with $i=1,2$ and $j=1,...,m+1$, denotes the $j$th coordinate of $x_i$ and consider two measures
  \[
  \delta_{x_1}=\sum_{j=1}^{m+1}x_{1,j}\delta_{x_{1,j}}\quad\text{ and }\quad \delta_{x_2}=\sum_{j=1}^{m+1}x_{2,j}\delta_{x_{2,j}}.
  \]  
 $\delta_{x_{i,j}}$ is the Dirac delta measure on $x_{i,j}$. Let 
 $$X := \{(x_{1,i}, x_{2,j}) \mid 1 \leq i,j \leq m+1\}$$
 and let 
 $c: X \to \R_+ \cup \{\infty\}$ 
 be a given cost function, we consider the discrete version of Kantorovich~\cite{Kantorovich} problem:
\begin{equation}\label{eq:into2}
\inf_{\gamma\in\Pi(\delta_{x_1},\delta_{x_2})}\sum_{1 \leq i,j \leq m+1}c_{i,j}\gamma_{i,j},
\end{equation}
where $c_{ij}=c(x_{1i},x_{2j})$,
\begin{align}\label{eq:begset2} 
\Pi(\delta_{x_1},\delta_{x_2}):=\{\gamma\in \R^{(m+1)\times(m+1)} \mid \gamma\mathbbm{1}_{m+1}=\delta_\pi\text{ and }\gamma^{\mathrm{T}}\mathbbm{1}_{m+1}=\delta_{\mathrm{sym}_\sigma(\pi)}\}
\end{align}
and $\mathbbm{1}_{m+1}\in\R^{m+1}$ is the vector of ones. The matrices $\gamma\in\Pi(\delta_{x_1},\delta_{x_2})$ are called \emph{transport plans}. Note that the set~\eqref{eq:begset2} is the set of doubly stochastic matrices which is a compact set (see~\cite[Chapter~3]{Galichon}) and hence the existence of optimizers $\gamma^*$ depends on the cost function $c$. In the continuous case,  problem~\eqref{eq:into2} is related to the classical Monge problem~\cite{Monge}. In particular, for the case  where the 
cost  is  $c(x_{1,i},x_{2,j})=|x_{1,i}-x_{2,j}|^2$ it is well-known (see for instance~\cites{Ambrosio,Knott,Ruschendorf})  that the solution of the Monge problem is obtain from the continuous version of problem~\eqref{eq:into2}. In general, the Monge problem does not always admit a solution even if the cost function is very regular. We note that optimal transport theory has become a useful tool for other fields (see for instance~\cites{Adu2022,Daniel2022,Chen,Peyre,Daniel}).

The characterization of the support of optimal transport plans will be useful in establishing our results. To state this result more precisely, we begin with the following definition. 
\begin{definition}
We say that a set $\Gamma\subset X$ is $c$-cyclically monotone, if for any $k\in\mathbb{N}$, any permutation $\sigma \in S_{k}$
and any finite family of points $((x_{1,1},x_{2,1}),\dots,(x_{1,k},x_{2,k}))\in\Gamma$, we have that 
\[
\sum_{i=1}^kc(x_{1,i},x_{2,i})\leq \sum_{i=1}^kc(x_{1,\sigma(i)},x_{2,\sigma(i)}).
\]    
\end{definition}
The following result will be useful; see \cites{Villani,Galichon}.
\begin{theorem}\label{thm: CCM}
If $\gamma^*$ is optimal for the cost $c$ and $c$ is continuous, then the support of $\gamma^*$ denoted as $\mathrm{spt}(\gamma^*)\subset X$ is a $c$-cyclical monotone set.  
\end{theorem}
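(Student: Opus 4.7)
The plan is to argue by contradiction via a direct perturbation. Suppose $\gamma^*$ is optimal but $\mathrm{spt}(\gamma^*)$ fails to be $c$-cyclically monotone. Reading the definition in its standard form (the right-hand side permuting only the second coordinate), there exist $k \in \mathbb{N}$, pairs $(x_{1,j_i}, x_{2,l_i}) \in \mathrm{spt}(\gamma^*)$ with $\gamma^*_{j_i,l_i} > 0$ for $i = 1, \ldots, k$, and a permutation $\sigma \in S_k$ such that
\[
\sum_{i=1}^k c_{j_i, l_i} \;>\; \sum_{i=1}^k c_{j_i, l_{\sigma(i)}}.
\]
The aim is to build a competitor $\tilde\gamma \in \Pi(\delta_{x_1}, \delta_{x_2})$ whose total cost is strictly less than that of $\gamma^*$, contradicting optimality.

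Set $\varepsilon := \tfrac{1}{2}\min_{1 \leq i \leq k} \gamma^*_{j_i, l_i} > 0$ and define
\[
\tilde\gamma_{p,q} \;:=\; \gamma^*_{p,q} + \varepsilon \sum_{i=1}^k \bigl( \mathbbm{1}_{\{(p,q) = (j_i, l_{\sigma(i)})\}} - \mathbbm{1}_{\{(p,q) = (j_i, l_i)\}} \bigr).
\]
The first step is to verify $\tilde\gamma \in \Pi(\delta_{x_1}, \delta_{x_2})$. Non-negativity is immediate from the choice of $\varepsilon$ and the support hypothesis. For the row constraint, fix $p$ and sum over $q$: the perturbation contributes $\varepsilon\bigl(\#\{i : j_i = p\} - \#\{i : j_i = p\}\bigr) = 0$. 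For the column constraint, fix $q$ and sum over $p$: the perturbation contributes $\varepsilon\bigl(\#\{i : l_{\sigma(i)} = q\} - \#\{i : l_i = q\}\bigr) = 0$, where the cancellation uses that $\sigma$ is a bijection of $\{1,\dots,k\}$, so $(l_{\sigma(i)})_i$ and $(l_i)_i$ agree as multisets.

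The second step is a direct cost comparison:
\[
\sum_{p,q} c_{p,q}\,\tilde\gamma_{p,q} - \sum_{p,q} c_{p,q}\,\gamma^*_{p,q} \;=\; \varepsilon\sum_{i=1}^k\bigl(c_{j_i, l_{\sigma(i)}} - c_{j_i, l_i}\bigr) \;<\; 0,
\]
by the assumed failure of cyclical monotonicity, contradicting optimality of $\gamma^*$.

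The one subtlety requiring attention is that the indices $j_i$ or $l_i$ in the witnessing family need not be distinct; the perturbation above is arranged so that the bijectivity of $\sigma$ alone forces row- and column-wise cancellation, independently of any repetitions. Continuity of $c$ plays no essential role in this finite, discrete setting and could in principle be dispensed with; it becomes relevant only in the continuous version of the theorem, where such finite perturbations are combined with an approximation argument on the support.
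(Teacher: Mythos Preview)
Your argument is the standard perturbation proof of this result in the discrete setting, and it is essentially correct. Note, however, that the paper does not supply its own proof of this theorem: it is quoted from the references \cite{Villani} and \cite{Galichon} (see the sentence immediately preceding the statement), so there is no in-paper argument to compare against. What you have written is exactly the kind of proof those references give in the finite case.

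One small technical remark on non-negativity. Your choice $\varepsilon = \tfrac{1}{2}\min_i \gamma^*_{j_i,l_i}$ guarantees $\tilde\gamma \geq 0$ only if each pair $(j_i,l_i)$ occurs at most twice in the witnessing family; if some pair is repeated $r \geq 3$ times, the cumulative subtraction $r\varepsilon$ at that entry can exceed $\gamma^*_{j_i,l_i}$. Your discussion of repetitions addresses only the marginal constraints, not this point. The fix is routine: either take $\varepsilon = \tfrac{1}{k}\min_i \gamma^*_{j_i,l_i}$, or first reduce (via the cycle decomposition of $\sigma$) to a witnessing family with pairwise distinct support points. Your observation that continuity of $c$ is superfluous in this finite setting is correct.
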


Note that in this discrete setting, since all mass of $\delta_{x_i}$, where $i=1,2$, are concentrated on isolated points, the $c$-cyclical monotone set can be used to define a linear map which will describe the optimal pairings $x_{1,i}$ and $x_{2,j}$. Most importantly, if the cost $c$ is convex, then this linear map is unique.

\section{Main Results and Proofs}\label{sec:proof}
This Section is dedicated to providing a proof of the conjuctures stated in~\cite{Hohloch}.  We will demonstrate here that, unlike in \cite{Hohloch},  the Kantorovich formulation of optimal transport~\eqref{eq:into2}-~\eqref{eq:begset2} offers an alternative, more concise approach for handling the higher-dimensional case. 
We now state are main results. Recall that for a partition $\pi \in \mathcal{P}_m(n)$ and its $\sigma$-symmetric partition $\mathrm{sym}_\sigma (\pi)$, we associate Young diagrams as in Definition \ref{def:YD}, and those, we associate measures $\delta_\pi$ and $\delta_{\mathrm{sym}_\sigma(\pi)}$ as in \eqref{eq:YDmeaure}, and define  the Wasserstein distance between $\delta_\pi$ and $\delta_{\mathrm{sym}_\sigma(\pi)}$ as
\begin{equation}\label{eq:Wasserstein function}
W(\delta_\pi,\delta_{\mathrm{sym}_\sigma(\pi)}) :=\min_{\gamma\in\Pi(\delta_\pi,\delta_{\mathrm{sym}_\sigma(\pi)})}\sum_{i,j=1}^{m+1} c_{ij}\gamma_{ij}.
\end{equation}
where $c=(c_{ij})\in\R^{(m+1)\times(m+1)}$,  $c_{ij}=|i-j|^2$ and $\Pi(\delta_\pi,\delta_{\mathrm{sym}_\sigma(\pi)})$ is defined in~\eqref{eq:begset2}.

\begin{theorem}\label{thm:main}
Let $\pi\in\mathcal{P}_m(n)$ and   $\sigma\in S_{m+1}$. 
The matrix 
~$T_{\sigma}=(e_{\sigma(1)},\dots,e_{\sigma(m+1)})$, where $e_1,\dots,e_{m+1}$ is the standard basis of $\mathbb{R}^{m+1}$, induces the optimal matrix in~\eqref{eq:Wasserstein function}. In particular, the map which is the identity on $\mathrm{spt}(\delta_\pi) \cap \mathrm{spt}(\delta_{\mathrm{sym}_\sigma(\pi)})$ and is $T_\sigma$ otherwise, is optimal for $\delta_\pi$ and $\delta_{\mathrm{sym}_\sigma(\pi)}$.
\end{theorem}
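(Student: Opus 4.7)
The strategy is to invoke Theorem \ref{thm: CCM} alongside its converse for convex costs: for the quadratic cost $c(x,y) = \|x - y\|^2$, a transport plan is optimal if and only if its support is $c$-cyclically monotone. Accordingly, I will construct the candidate plan $\gamma^*$ induced by the map $F$ described in the theorem statement, verify it lies in $\Pi(\delta_\pi, \delta_{\mathrm{sym}_\sigma(\pi)})$, and then check that $\mathrm{spt}(\gamma^*)$ is $c$-cyclically monotone.

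Set $A = \mathrm{spt}(\delta_\pi)$ and $B = \mathrm{spt}(\delta_{\mathrm{sym}_\sigma(\pi)})$, both of cardinality $n$; by Definition~\ref{def:selfsym}, $T_\sigma$ restricts to a bijection $A \to B$. Define $F(x) = x$ on $A \cap B$ and $F(x) = T_\sigma(x)$ on $A \setminus B$, and put $\gamma^* = \frac{1}{n}\sum_{x \in A} \delta_{(x, F(x))}$. The first technical subtask is to verify that $F$ is a well-defined bijection $A \to B$, equivalently that $T_\sigma$ sends $A \setminus B$ into $B \setminus A$; I would argue this via an orbit analysis of $T_\sigma$ acting on $A \cup B$, using that $T_\sigma$ has finite order and that $T_\sigma(A) = B$ forces the orbits to distribute compatibly across the three regions $A \cap B$, $A \setminus B$, and $B \setminus A$. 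Granted bijectivity, the marginal conditions in \eqref{eq:begset2} are immediate.

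For the cyclic monotonicity of $\mathrm{spt}(\gamma^*) = \{(x, F(x)) : x \in A\}$, I would expand
\[
\sum_{i=1}^k \|x_i - F(x_{\tau(i)})\|^2 - \sum_{i=1}^k \|x_i - F(x_i)\|^2 = 2\sum_{i=1}^k \langle x_i, F(x_i) - F(x_{\tau(i)}) \rangle,
\]
where the $\|F(\cdot)\|^2$ terms cancel because $\{F(x_{\tau(i)})\}$ is a permutation of $\{F(x_i)\}$ as a multiset. Thus cyclic monotonicity reduces to showing that $F$ maximizes $\sum_{x \in A} \langle x, F(x) \rangle$ over all bijections $A \to B$. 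I would carry this out by an exchange argument, starting from a competing bijection $G$ and swapping pairs one at a time: the identity on $A \cap B$ handles matches that contribute $\|x\|^2$, while the orthogonality of $T_\sigma$ together with the monotone lattice structure of Young-diagram corners governs pairs with $x \in A \setminus B$.

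I expect the main obstacle to lie in the exchange steps coupling the identity-on-$A \cap B$ part with the $T_\sigma$-on-$A \setminus B$ part: the pairwise monotonicity $\langle x - x', F(x) - F(x') \rangle \geq 0$ is not automatic, since $T_\sigma$ is not a symmetric operator in general and so the quadratic form $v \mapsto \langle v, T_\sigma v \rangle$ need not be positive semidefinite on $\mathbb{R}^{m+1}$. The proof must therefore exploit the staircase structure of the Young diagram, whose lattice points have positive integer coordinates subject to specific monotonicity constraints, to keep the relevant differences $v = x - x'$ inside a region where these inner products are nonnegative. Once cyclic monotonicity is established, the converse of Theorem~\ref{thm: CCM} identifies $\gamma^*$ as an optimizer of \eqref{eq:Wasserstein function}, completing the proof.
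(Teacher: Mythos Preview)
Your overall strategy mirrors the paper's: both split the problem into the common support, where the identity is trivially optimal at zero cost, and its complement, and both invoke $c$-cyclical monotonicity (Theorem~\ref{thm: CCM}) to handle the second piece. The paper simply asserts the decoupling $\Pi(\delta_\pi,\delta_{\mathrm{sym}_\sigma(\pi)})=\Pi(\omega_\sigma,\omega_\sigma)\cup\Pi(\mu_\sigma,\nu_\sigma)$ and then appeals to cyclical monotonicity of the optimizer on the complement to conclude it is induced by $T_\sigma$, whereas you try to verify cyclical monotonicity of the candidate directly via the inner-product maximization reduction; that reduction is correct and is a reasonable way to make the paper's terse last step precise.

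The genuine gap lies exactly where you flagged it, and your proposed orbit analysis cannot close it: the claim that $T_\sigma(A\setminus B)\subseteq B\setminus A$ is \emph{false} for general $\sigma\in S_{m+1}$. Take $m=2$, $\sigma=(1\,2\,3)$, so that $T_\sigma(x_1,x_2,x_3)=(x_3,x_1,x_2)$, and let $\pi\in\mathcal{P}_2(3)$ have support $A=\{(1,1,1),(2,1,1),(1,2,1)\}$. Then $B=T_\sigma(A)=\{(1,1,1),(1,2,1),(1,1,2)\}$, so $A\setminus B=\{(2,1,1)\}$ and $B\setminus A=\{(1,1,2)\}$, but $T_\sigma(2,1,1)=(1,2,1)\in A\cap B$. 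Your map $F$ is therefore two-to-one onto $(1,2,1)$ and misses $(1,1,2)$; the plan $\gamma^*$ you write down is not even a coupling, and no orbit argument can repair this because $3$-cycles genuinely permit orbits meeting $A$ twice and $B\setminus A$ once. The correct optimal map here sends $(2,1,1)\mapsto(1,1,2)=T_\sigma^2(2,1,1)$, not $T_\sigma(2,1,1)$. Your argument (and the paper's) does go through when $\sigma$ is an involution, since $T_\sigma^2=\mathrm{Id}$ forces $T_\sigma(A\setminus B)=B\setminus A$; but for general $\sigma$ the map described in the theorem statement is not a transport map, so neither your exchange argument nor the paper's decoupling can proceed as written.
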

We state here that the optimal matrix  corresponding to $W(\delta_\pi,\delta_{\mathrm{sym}_\sigma(\pi)})$ exists in $\Pi(\delta_\pi,\delta_{\mathrm{sym}_\sigma(\pi)})$, since the cost $c_{ij}$ is Euclidean distance/cost and the constraint set is a compact set. 
\begin{theorem}\label{cor:main}
A partition $\pi\in\mathcal{P}_m(n)$ is $\sigma$-self-symmetric  if and only if $W(\delta_{\pi},\delta_{\mathrm{sym}_{\sigma}(\pi)})=0$, where $W(\delta_{\pi},\delta_{\mathrm{sym}_{\sigma}(\pi)})$ is defined in~\eqref{eq:Wasserstein function}. 
\end{theorem}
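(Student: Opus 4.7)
The plan is to establish both implications by reducing the condition $W(\delta_\pi,\delta_{\mathrm{sym}_\sigma(\pi)})=0$ to the equality of the measures $\delta_\pi = \delta_{\mathrm{sym}_\sigma(\pi)}$, and then using the fact that the measure $\delta_\pi$ uniquely recovers the Young diagram $Y(\pi)$, and hence the partition $\pi$, through the construction in \eqref{eq:YDmeaure}.

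For the forward implication, I would assume $\pi$ is $\sigma$-self-symmetric, so $\pi = \mathrm{sym}_\sigma(\pi)$. By Definition~\ref{def:selfsym} this gives $Y(\pi) = Y(\mathrm{sym}_\sigma(\pi))$, and then \eqref{eq:YDmeaure} immediately yields $\delta_\pi = \delta_{\mathrm{sym}_\sigma(\pi)}$. The diagonal transport plan $\gamma^*_{ii} = (\delta_\pi)_i$, $\gamma^*_{ij}=0$ for $i\neq j$, then lies in $\Pi(\delta_\pi,\delta_{\mathrm{sym}_\sigma(\pi)})$ and achieves the value $\sum_{i,j} c_{ij}\gamma^*_{ij} = 0$ since $c_{ii} = 0$. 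As the cost is non-negative, the minimum in \eqref{eq:Wasserstein function} equals $0$.

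For the reverse implication, suppose $W(\delta_\pi,\delta_{\mathrm{sym}_\sigma(\pi)}) = 0$ and let $\gamma^*$ be an optimizer, which exists by the compactness of $\Pi(\delta_\pi,\delta_{\mathrm{sym}_\sigma(\pi)})$ noted after Theorem~\ref{thm:main}. Because $c_{ij} = |i-j|^2 \geq 0$ with equality if and only if $i = j$, the identity $\sum_{i,j}c_{ij}\gamma^*_{ij} = 0$ forces $\gamma^*_{ij} = 0$ whenever $i\neq j$. The marginal conditions in \eqref{eq:begset2} then give $(\delta_\pi)_i = \gamma^*_{ii} = (\delta_{\mathrm{sym}_\sigma(\pi)})_i$ for every $i$, and hence $\delta_\pi = \delta_{\mathrm{sym}_\sigma(\pi)}$. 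Since \eqref{eq:YDmeaure} assigns mass $1/n$ to each distinguished corner of $Y(\pi)$, equal measures force equal supports, so $Y(\pi) = Y(\mathrm{sym}_\sigma(\pi))$, which by Definition~\ref{def:selfsym} means $\pi = \mathrm{sym}_\sigma(\pi)$.

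The main obstacle I anticipate is notational rather than mathematical: the cost in \eqref{eq:Wasserstein function} is written with integer indices $i,j \in \{1,\ldots,m+1\}$, while the measures live on $\mathbb{R}^{m+1}$ with $n$ support points. One should interpret the indices as labelling the $n$ support points of the marginals, with the cost understood as the squared Euclidean distance between the corresponding points. Once this is clarified, no deeper machinery, and in particular no appeal to Theorem~\ref{thm:main} or to $c$-cyclical monotonicity, is required; the argument hinges only on the strict positivity of the cost off the diagonal and the fact that distinct partitions yield distinct measures.
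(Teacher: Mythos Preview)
Your argument is correct and follows essentially the same route as the paper's: both directions hinge on the nonnegativity of $c_{ij}$ and $\gamma^*_{ij}$ forcing the optimal plan to be diagonal, whence the two measures (and therefore the Young diagrams and partitions) coincide. Your version is in fact slightly cleaner, since you extract $\delta_\pi=\delta_{\mathrm{sym}_\sigma(\pi)}$ directly from the marginal constraints rather than routing through Theorem~\ref{thm: CCM} on $c$-cyclical monotonicity, which in the paper's proof is invoked but not actually needed; your remark about the indexing mismatch in \eqref{eq:Wasserstein function} is also well taken.
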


\begin{proof}[Proof of Theorem \ref{thm:main}]
Consider measures  $\mu_{\sigma},\nu_{\sigma},\omega_{\sigma}\in\mathcal{P}(\R^{m+1})$  such that 
\begin{align*}
\mathrm{spt}(\omega_{\sigma})=&\mathrm{spt}(\delta_{\pi})\cap \mathrm{spt}(\delta_{\mathrm{sym}_{\sigma}(\pi)}),\cr
\mathrm{spt}(\mu_{\sigma})=&\mathrm{spt}(\delta_{\pi})\backslash \left(\mathrm{spt}(\delta_{\pi})\cap \mathrm{spt}(\delta_{\mathrm{sym}_{\sigma}(\pi)})\right),\cr 
\mathrm{spt}(\nu_{\sigma})=&\mathrm{spt}(\delta_{\mathrm{sym}_{\sigma}(\pi)})\backslash\left(\mathrm{spt}(\delta_{\pi})\cap \mathrm{spt}(\delta_{\mathrm{sym}_{\sigma}(\pi)})\right).
\end{align*}
Then
we decouple $\Pi(\delta_{\pi},\delta_{\mathrm{sym}_{\sigma}(\pi)})$ as disjoint union 
\[
\Pi(\delta_{\pi},\delta_{\mathrm{sym}_{\sigma}(\pi)})=\Pi(\omega_{\sigma},\omega_{\sigma})\cup\Pi(\mu_{\sigma},\nu_{\sigma}).
\]
where $\Pi(\omega_{\sigma},\omega_{\sigma})$ is the set of matrices concentrated on entries corresponding to $\mathrm{spt}(\omega_{\sigma})\times \mathrm{spt}(\omega_{\sigma})$ and $\Pi(\mu_{\sigma},\nu_{\sigma})$ is the set of matrices concentrated on entries corresponding to the compliment of $\mathrm{spt}(\omega_{\sigma})\times \mathrm{spt}(\omega_{\sigma})$. Therefore, we have that  
\begin{equation*}
W(\delta_{\pi},\delta_{\mathrm{sym}_{\sigma}(\pi)})=\min_{\hat{\gamma}\in\Pi(\omega_{\sigma},\omega_{\sigma})}\sum_{ij=1}^{m+1} c_{ij}\hat{\gamma}_{ij}+ \min_{\tilde{\gamma}\in\Pi(\mu_{\sigma},\nu_{\sigma})}\sum_{ij=1}^{m+1} c_{ij}\tilde{\gamma}_{ij}.
\end{equation*}
However, since $c_{ij}=|i-j|^2$, we have that  
\[
\min_{\hat{\gamma}\in\Pi(\omega_{\sigma},\omega_{\sigma})}\sum_{ij=1}^{m+1} c_{ij}\hat{\gamma}_{ij}=0,
\]
where $\hat{\gamma}^*\in\Pi(\omega_{\sigma},\omega_{\sigma})$ is the unique  diagonal matrix. Therefore,
\begin{equation}\label{eq:optimal transport on compliment set}
W(\delta_{\pi},\delta_{\mathrm{sym}_{\sigma}(\pi)})=\min_{\tilde{\gamma}\in\Pi(\mu_{\sigma},\nu_{\sigma})}\sum_{ij}^{m+1} c_{ij}\tilde{\gamma}_{ij}.
\end{equation}
Furthermore, from Theorem~\ref{thm: CCM}, since the support $\mathrm{spt}(\tilde{\gamma}^*)\subset\mathrm{spt}(\mu_{\sigma})\times \mathrm{spt}(\nu_{\sigma})$ for the minimizer $\tilde{\gamma}$ for~\eqref{eq:optimal transport on compliment set} is a $c$-cyclical monotone set in $\mathrm{spt}(\mu_{\sigma})\times \mathrm{spt}(\nu_{\sigma})$ that depends on $\sigma\in S_{m+1}$,  we have that the optimal transport plan is induced by the matrix $T_{\sigma}=(e_{\sigma(1)},\dots,e_{\sigma(m+1)})$ where $e_1,\dots,e_{m+1}$ is the standard basis in $\mathbb{R}^{m+1}$.
\end{proof}

We proceed to the proof of the next result.

\begin{proof}[Proof of Theorem \ref{cor:main}]
Suppose $\pi\in\mathcal{P}_m(n)$ is a $\sigma$-self-symmetric partition. Then, from Definition \ref{def:selfsym}, we have that $\pi=\mathrm{sym}_{\sigma}(\pi)$ and there exists $\map{T_{\sigma}}{\R^{m+1}}{\R^{m+1}}$ such that 
\[
Y(\mathrm{sym}_{\sigma}(\pi))=T_{\sigma}(Y(\pi)).
\]
Then, since $\pi\in\mathcal{P}_m(n)$ is a $\sigma$-self-symmetric partition, we have that $Y(\pi)=T_{\sigma}(Y(\pi))$. This implies that $W(\delta_{\pi},\delta_{\mathrm{sym}_{\sigma}(\pi)})=0$. The optimal transport map and plan are the do-nothing map and plan.

Conversely, suppose $W(\delta_{\pi},\delta_{\mathrm{sym}_{\sigma}(\pi)})=0$. Then there exists an optimal matrix $\gamma^*\in \Pi(\delta_{\pi},\delta_{\mathrm{sym}_{\sigma}(\pi)})$ such that 
\[
\sum_{ij}^{m+1} c_{ij}\gamma^*_{ij}=0.
\]
Now, since $c_{ij},\gamma^*_{ij}\geq 0$ the non-zero entries of 
$\gamma^*$ must be assigned to the entries where $c_{ij}=0$. Therefore, from Theorem~\ref{thm: CCM}, the set
\[
\{(i,j)\in \mathrm{spt}(\delta_{\pi})\times \mathrm{spt}(\delta_{\mathrm{sym}_{\sigma}(\pi)}) : c_{ij}=0\},
\]
is the $c$-cyclical monotone set for $\gamma^*$. Since $c_{ij}=|i-j|^2$, this implies that $i=j$ and hence the $c$-cyclical monotone set is a diagonal set and their Young diagram are the same. This implies that $\mathrm{sym}_{\sigma}(\pi)=\pi$ and hence from Definition \ref{def:selfsym} we conclude that $\pi\in\mathcal{P}_m(n)$ is $\sigma$-self-symmetric partition, which completes the proof.  
\end{proof}

\begin{example} 
Figure \ref{fig:thmex} gives an example of the optimal transport map for some $\pi \in \mathcal{P}_2(6)$ and $\mathrm{sym}_{(23)}\pi$.

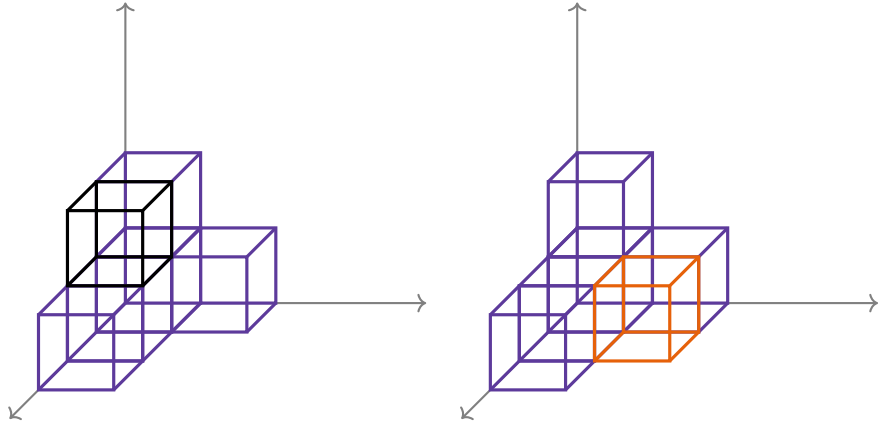
\begin{figure}
    \centering
\begin{tikzpicture}
		[cube/.style={very thick,black},
			grid/.style={very thin,gray},
			axis/.style={->,blue,thick}]

	\draw[axis, draw = gray] (0,0,0) -- (4,0,0) node[anchor=west]{};
	\draw[axis, draw = gray] (0,0,0) -- (0,4,0) node[anchor=west]{};
	\draw[axis, draw = gray] (0,0,0) -- (0,0,4) node[anchor=west]{};

    \PurpCube{0}{0}{0}
    \PurpCube{0}{1}{0}

    \PurpCube{1}{0}{0}

    \PurpCube{0}{0}{1}
    \PurpCube{0}{0}{2}

    \ACube{0}{1}{1}
    
\end{tikzpicture}
\begin{tikzpicture}
		[cube/.style={very thick,black},
			grid/.style={very thin,gray},
			axis/.style={->,blue,thick}]

	\draw[axis, draw = gray] (0,0,0) -- (4,0,0) node[anchor=west]{};
	\draw[axis, draw = gray] (0,0,0) -- (0,4,0) node[anchor=west]{};
	\draw[axis, draw = gray] (0,0,0) -- (0,0,4) node[anchor=west]{};

    \PurpCube{0}{0}{0}
    \PurpCube{0}{1}{0}

    \PurpCube{1}{0}{0}
   
    \PurpCube{0}{0}{1}
    \PurpCube{0}{0}{2}

    \OrCube{1}{0}{1}
    
\end{tikzpicture}
    
    \caption{The Young diagram $Y(\pi)$ for a $\pi \in \mathcal{P}_2(6)$ (left) and  $T_\sigma\big( Y(\pi)\big)$ (right) for the $\sigma$-symmetric partition of $\pi$ with 
    $\sigma = (2~3) \in S_3$. In \textcolor{CBorange}{purple} is their common support. The \textcolor{CBpurp}{orange} cube is the image of the black cube under the optimal transport map induced by $T_\sigma$.}
    \label{fig:thmex}
\end{figure}
\end{example}
\section{Conclusion and future work}\label{sec:conclusion and future work}
We have studied a class of $n$-dimensional partitions using tools from optimal transport. More precisely, we have shown that if the Wasserstein function on two measures from a partition is zero, their Young diagrams are the same and hence  they must be self-symmetric partitions. We believe the Kantorovich formulation can also be adapted to study matching between even and odd partitions as addressed in \cite{Hohloch} in the case of partitions matched by Euler's identity. 

In the future, one can study matching between different partitions and potentially a multi-partition version. In particular, given $m$-dimensional partitions $\pi_1,\dots,\pi_k \in\mathcal{P}_m(n)$, what is the closest partition to these partitions?
This problem we believe is related to multi-marginal optimal transport (see~\cite{Brendan} for the survey on this topic).

\begin{bibdiv}
\begin{biblist}
\bib{Daniel}{article}{
  title={Stochastic Bridges over Ensemble of Linear Systems},
  author={Adu, Daniel Owusu},
   author={Chen, Yongxin},
  journal={arXiv preprint arXiv:2309.06350},
  year={2023}
}

\bib{Adu2022}{article}{
  title={Optimal Transport for Averaged Control},
  author={Adu, Daniel Owusu},
  journal={IEEE Control Systems Letters},
  volume={7},
  pages={727--732},
  year={2022},
  publisher={IEEE}
}

\bib{Daniel2022}{article}{
  title={Optimal transport for a class of linear quadratic differential games},
  author={Adu, Daniel Owusu},
  author={Ba{\c{s}}ar, Tamer},
  author = {Gharesifard, Bahman},
  journal={IEEE Transactions on Automatic Control},
  volume={67},
  number={11},
  pages={6287--6294},
  year={2022},
  publisher={IEEE}
}

\bib{Ambrosio}{article}{
  title={Existence and stability results in the $L^1$ theory of optimal transportation},
  author={Ambrosio, Luigi}, author = {Caffarelli, Luis A},
  author = {Brenier, Yann},
  author = { Buttazzo, Giuseppe}, 
  author = {Villani, Cedric},
  author = {Salsa, Sandro},
 author = {Pratelli, Aldo},
  journal={Optimal Transportation and Applications: Lectures given at the CIME Summer School, held in Martina Franca, Italy, September 2-8, 2001},
  pages={123--160},
  year={2003},
  publisher={Springer}
}

\bib{Chen}{article}{
  title={Optimal transport in systems and control},
  author={Chen, Yongxin},
  author ={Georgiou, Tryphon T},
  author = {Pavon, Michele},
  journal={Annual Review of Control, Robotics, and Autonomous Systems},
  volume={4},
  pages={89--113},
  year={2021},
  publisher={Annual Reviews}
}

\bib{Cuesta}{article}{
  title={Properties of the optimal maps for the $L^2$-Monge-Kantorovich transportation problem},
  author={Cuesta-Albertos, J.A.}, 
  author = {Matr{\'a}n, C},
  author = {Tuero-D\'{i}az, A},
  eprint={https://personales.unican.es/cuestaj/PropertiesOptimalMaps.pdf},
  year={1996}
}

\bib{Galichon}{book}{
  title={Optimal Transport Methods in Economics},
  author={Galichon, Alfred},
  year={2018},
  publisher={Princeton University Press}
}

\bib{Hohloch}{article}{
      title={Optimal transport and integer partitions}, 
      author={Hohloch, Sonja},
      journal = {Discrete Applied Mathematics},
      year={2015},
      volume = {190-191},
      pages = {75-85},
}

\bib{Kantorovich}{inproceedings}{
  title={On a problem of Monge},
  author={Kantorovich, Leonid V},
  booktitle={CR (Doklady) Acad. Sci. USSR (NS)},
  volume={3},
  pages={225--226},
  year={1948}
}

\bib{Knott}{article}{
  title={On the optimal mapping of distributions},
  author={Knott, Martin},
  author={Smith, Cyril},
  journal={Journal of Optimization Theory and Applications},
  volume={43},
  pages={39--49},
  year={1984},
  publisher={Springer}
}

\bib{VanL}{book}{
    title={A Course in Combinatorics},
    author = {van Lint, J. H.},
    author ={Wilson, R. M.},
    publisher = {Cambridge University Press},
    year = {2001},
    edition={2}
}

\bib{Monge}{article}{
  title={M{\'e}moire sur la th{\'e}orie des d{\'e}blais et des remblais},
  author={Monge, Gaspard},
  journal={Mem. Math. Phys. Acad. Royale Sci.},
  pages={666--704},
  year={1781}
}

\bib{Brendan}{article}{
  title={Multi-Marginal Optimal Transport: Theory and Applications},
  author={Pass, Brendan},
  journal={ESAIM: Mathematical Modelling and Numerical Analysis-Mod{\'e}lisation Math{\'e}matique et Analyse Num{\'e}rique},
  volume={49},
  number={6},
  pages={1771--1790},
  year={2015}
}

\bib{Peyre}{article}{
  title={Computational optimal transport: With applications to data science},
  author={Peyr{\'e}, Gabriel},
  author = {Cuturi, Marco},
  journal={Foundations and Trends in Machine Learning},
  volume={11},
  number={5-6},
  pages={355--607},
  year={2019},
  publisher={Now Publishers, Inc.}
}

\bib{Ruschendorf}{article}{
  title={A characterization of random variables with minimum $L^2$-distance},
  author={R{\"u}schendorf, Ludger},
  author = {Rachev, Svetlozar},
  journal={Journal of multivariate analysis},
  volume={32},
  number={1},
  pages={48--54},
  year={1990},
  publisher={Elsevier}
}

\bib{Villani}{book}{
  title={Optimal transport: old and new},
  author={Villani, C{\'e}dric},
  volume={338},
  year={2009},
  publisher={Springer}
}

\bib{Johnson}{article}{
  title={Surface matching for object recognition in complex three-dimensional scenes},
  author={Johnson, Andrew E},
  author={Hebert, Martial},
  journal={Image and Vision Computing},
  volume={16},
  number={9-10},
  pages={635--651},
  year={1998},
  publisher={Elsevier}
}

\end{biblist}
\end{bibdiv}
\end{document}